\DeclareFontFamily{OT1}{rsfs}{}
\DeclareFontShape{OT1}{rsfs}{n}{it}{<-> rsfs10}{}
\DeclareMathAlphabet{\mathscr}{OT1}{rsfs}{n}{it}
\newtheorem{theorem}{Theorem}[section]
\newtheorem{lemma}[theorem]{Lemma}
\newtheorem{corol}[theorem]{Corollary}
\newtheorem{prop}[theorem]{Proposition}
\newtheorem{question}[theorem]{Question}
\theoremstyle{definition} \newtheorem{defin}[theorem]{Definition}}
\theoremstyle{remark} \newtheorem{remark}[theorem]{Remark}
\newtheorem{example}[theorem]{Example}
\newcommand{\Cbb}{{\mathbb{C}}}
\newcommand{\Pbb}{{\mathbb{P}}}
\newcommand{\Zbb}{{\mathbb{Z}}}
\title[Chern Classes for Locally Quasi-Homogeneous Free Divisors]{Chern Classes of Logarithmic Vector Fields for Locally Quasi-Homogeneous Free Divisors}
\author{Xia Liao}
\address{
KIAS
85 Hoegiro Dongdaemun-gu
Seoul 02455 Republic of Korea
}
\email{liao@kias.re.kr}
\begin{document}
\maketitle

\begin{abstract}
Let $X$ be a nonsingular complex projective variety and $D$ a locally quasi-homogeneous free divisor in $X$. In this paper we study a numerical relation between the Chern class of the sheaf of logarithmic derivations on $X$ with respect to $D$, and the Chern-Schwartz-MacPherson class of the complement of $D$ in $X$. Our result confirms a conjectural formula for these classes, at least after push-forward to projective space; it proves the full form of the conjecture for locally quasi-homogeneous free divisors in $\mathbb P^n$. The result generalizes several previously known results. For example, it recovers a formula of M. Mustata and H. Schenck for Chern classes for free hyperplane arrangements. Our main tools are Riemann-Roch and the logarithmic comparison theorem of Calderon-Moreno, Castro-Jimenez, Narvaez-Macarro, and David Mond. As a subproduct of the main argument, we also obtain a schematic Bertini statement for locally quasi-homogeneous divisors.
\end{abstract}

\section{introduction}
Let $X$ be a nonsingular variety defined over $\Cbb$, $D$ a reduced effective divisor and $U=X\smallsetminus D$ is the hypersurface complement. After a conjecture of Aluffi \cite{hyparr}, we raised the following question in \cite{liao}: 
\begin{question}\label{q}
In the Chow ring $A_*(X)$, under what conditions is the formula
\begin{equation}\label{formula}
c_{SM}(1_U)=c(\textup{Der}_X(-\log D))\cap [X]
\end{equation}
true?
\end{question}

The left hand side of the formula is the Chern-Schwartz-MacPherson class of the open subvariety $U$, and the right hand side is the total Chern class of the sheaf of logarithmic vector fields along $D$. In \cite{liao}, we have proven that in the case where $X$ is a complex surface (not necessarily a projective surface), the above formula is true if and only if the divisor $D$ has only quasi-homogeneous singularities. This result strongly overlaps a result by Adrian Langer in \cite{MR1971155} Proposition 6.1 and Corollary 6.2.

This paper is a continuation of the investigation for conditions which guarantee the above formula. The main result of this paper is:

\begin{theorem}\label{main}
Let $X \subseteq \Pbb^N$ be a nonsingular projective variety defined over $\Cbb$, and let $D$ be a locally quasi-homogeneous free divisor in $X$. Then the two classes appearing in \eqref{formula} agree after push-forward to $\Pbb^N$, that is:
\begin{equation}\label{num}
\int_X \! c_1(\mathscr{O}(1))^i \cap c_{SM}(1_U) = \int_X \! c_1(\mathscr{O}(1))^i \cap \Big(c(\textup{Der}_X(-\log D))\cap [X]\Big)
\end{equation}
for all $i \geq 0$.
\end{theorem}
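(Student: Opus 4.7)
My plan is to establish the identity \eqref{num} by reducing it to the $i = 0$ case on generic linear sections, using a Bertini-type theorem for locally quasi-homogeneous free divisors as the principal technical input. Writing $T := \textup{Der}_X(-\log D)$ and $n := \dim X$, the fact that only degree-$n$ components of $c(T) \cap [X]$ contribute when integrated against $h^i$ reduces the right hand side of \eqref{num} to $\int_X h^i \cdot c_{n-i}(T)$. The task becomes showing
\[
\int_X h^i \cap c_{SM}(1_U) \;=\; \int_X h^i \cdot c_{n-i}(T), \qquad i = 0, \ldots, n.
\]
The $i = 0$ case is the classical chain: MacPherson / Chern--Gauss--Bonnet gives $\int_X c_{SM}(1_U) = \chi(U)$; the LCT gives $\chi(U) = \sum_p (-1)^p \chi(X, \Omega^p_X(\log D))$; HRR combined with the Chern-root identity $\sum_p (-1)^p \textup{ch}(\Omega^p_X(\log D)) = \prod_j (1 - e^{-\alpha_j})$ (where $\alpha_j$ are the Chern roots of $T$), together with the degree-truncation argument in $A^*(X)$, collapses the alternating sum to $\int_X c_n(T)$.

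To bootstrap to $i > 0$ I would first establish a Bertini theorem: for a generic codimension-$i$ linear subspace $L^i \subset \mathbb{P}^N$, the intersection $D \cap X \cap L^i$ is locally quasi-homogeneous free in $Y := X \cap L^i$. Working locally at a point of $D$, after fixing quasi-homogeneous coordinates in which $D$ has quasi-homogeneous defining equation, a generic linear form is transverse to the origin and its restriction yields a quasi-homogeneous equation in one fewer variable. Freeness is then verified via Saito's criterion applied to restrictions of the generators of $\textup{Der}_X(-\log D)$ to $Y$. This is the ``schematic Bertini'' statement advertised in the abstract.

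With Bertini in hand, both sides of the target identity unfold iteratively on linear sections. On the right, the projection formula combined with the short exact sequence
\[
0 \to T_Y(-\log(D \cap Y)) \to T|_Y \to N_{Y/X} \to 0
\]
(valid by transversality) expresses $\int_X h^i \cdot c_{n-i}(T) = \int_Y c_{n-i}(T)|_Y$ as the base-case contribution $\int_Y c_{\dim Y}(T') = \chi(U \cap Y)$, with $T' := T_Y(-\log (D \cap Y))$, plus integrals of the form $\int_Y c_k(T') \cdot h^{n-i-k}|_Y$ which recursively reduce to Euler characteristics of further linear sections of $U$. A parallel unfolding of the left hand side, using the Kennedy-Aluffi formula for $c_{SM}$ under generic hyperplane section, produces an analogous linear combination of such Euler characteristics. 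Specializing to $D = \emptyset$, where both sides of \eqref{num} agree with $\int_X h^i \cdot c_{n-i}(T_X)$ by the classical smooth case, provides the essential matching constraint on the coefficients. The hardest step is proving the Bertini theorem for LQH free divisors, which requires careful local tracking of both the quasi-homogeneous coordinate structure and Saito's freeness criterion under generic restriction; a secondary but nontrivial obstacle is rigorously matching the coefficients in the two iterative decompositions, possibly handled cleanly by induction on $\dim X$ or by a more direct Chern-character argument.
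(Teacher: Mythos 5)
Your roadmap is essentially the paper's (LCT plus Riemann--Roch for the $i=0$ case, a Bertini statement for locally quasi-homogeneous free divisors, the sequence $0 \to \textup{Der}_{X'}(-\log D') \to i^*\textup{Der}_X(-\log D) \to N \to 0$ on a general hyperplane section, a hyperplane-section formula for $c_{SM}$, and a reduction organized by linear sections), but the step you yourself single out as the crux --- the Bertini theorem --- is sketched in a way that would fail. Restricting a weighted-homogeneous local equation to a \emph{generic} hyperplane through the point does not yield a weighted-homogeneous equation in one fewer variable: quasi-homogeneity is tied to special coordinates and weights, and a generic linear slice is not compatible with them. The working mechanism is different. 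One introduces at each $p\in D$ the subspace $TD_p\subset T_pX$ spanned by values of logarithmic vector fields (triviality index $t(p)=\dim TD_p$), shows via the Euler vector field that $t(q)\ge 1$ at every point other than the center of a quasi-homogeneous chart, and deduces by induction that the stratum $\{t=i\}$ has dimension at most $i$. A dimension count on the incidence variety of bad hyperplanes then shows that a generic $H$ misses the stratum $\{t=0\}$ altogether and, at every $p\in D\cap H$, admits a logarithmic vector field $\theta$ with $\theta_p\notin T_pH$; the flow of $\theta$ identifies $(H\cap X, D\cap H)$ near $p$ with a \emph{coordinate}-hyperplane section of the local product $D'\times\Cbb$, and it is only for such weight-compatible hyperplanes that the restricted equation is again quasi-homogeneous. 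The same product structure, not naive restriction, gives freeness: $\textup{Der}(-\log D'\times\Cbb)\cong \textup{Der}(-\log D')\oplus\mathcal{O}$, so Saito's criterion descends; by contrast, restrictions to $Y$ of generators of $\textup{Der}_X(-\log D)$ are sections of $i^*\textup{Der}_X(-\log D)$, not logarithmic derivations on $Y$, so ``Saito's criterion applied to restrictions of the generators'' has no content without this splitting. Without the stratification, the global genericity claim is also unsupported.

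The remainder of your plan is sound but is cleaner, and closer to airtight, when organized as a single induction on $\dim X$ rather than as a matching of coefficients between two unfoldings: the exact sequence gives $i_*\bigl(c(\textup{Der}_{X'}(-\log D'))\,c(N)\cap[X']\bigr)=c_1(\mathscr{O}(1))\cdot c(\textup{Der}_X(-\log D))\cap[X]$, while Aluffi's hypersurface-section formula gives $i_*\bigl(c(N)\cap c_{SM}(1_{U'})\bigr)=c_1(\mathscr{O}(1))\cap c_{SM}(1_U)$, so for $i\ge 1$ both sides of \eqref{num} satisfy the identical recursion in terms of the pair $(X',D')$, and LCT together with Hirzebruch--Riemann--Roch supplies the $i=0$ case in every dimension. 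Your proposed ``specialize to $D=\emptyset$'' constraint is unnecessary and would not in any case constrain the decomposition for a particular singular $D$.
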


\begin{corol}\label{prspace}
If $X= \Pbb^n$, formula~\eqref{formula} holds in the Chow ring for all locally quasi-homogeneous free divisors.
\end{corol}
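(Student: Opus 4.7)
The plan is to deduce Corollary~\ref{prspace} from Theorem~\ref{main} by specializing to $X = \Pbb^n$ with the identity embedding into $\Pbb^N$ (taking $N=n$). With this choice the push-forward to $\Pbb^N$ appearing in the statement of Theorem~\ref{main} is literally the identity on cycles, so the theorem directly yields
\begin{equation*}
\int_{\Pbb^n} c_1(\mathscr{O}(1))^i \cap c_{SM}(1_U) = \int_{\Pbb^n} c_1(\mathscr{O}(1))^i \cap \Big( c(\textup{Der}_X(-\log D)) \cap [X] \Big)
\end{equation*}
for every $i \geq 0$, where now both classes live natively in $A_*(\Pbb^n)$.

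It remains to promote these numerical identities to an equality of classes in $A_*(\Pbb^n)$. For this I would invoke the elementary fact that $A_*(\Pbb^n) = \bigoplus_{k=0}^n \Zbb \cdot [\Pbb^k]$, together with the computation $\int_{\Pbb^n} c_1(\mathscr{O}(1))^i \cap [\Pbb^k] = \delta_{ik}$ (by Bezout, or equivalently by the projection formula applied to a generic linear inclusion $\Pbb^k \hookrightarrow \Pbb^n$). Consequently the $n+1$ functionals $\alpha \mapsto \int_{\Pbb^n} c_1(\mathscr{O}(1))^i \cap \alpha$ for $i = 0, \ldots, n$ form a dual basis to $[\Pbb^0], \ldots, [\Pbb^n]$, and in particular they separate classes in $A_*(\Pbb^n)$. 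The equality of all these integrals therefore forces the equality of the two classes themselves, which is formula~\eqref{formula}.

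There is essentially no obstacle in this argument once Theorem~\ref{main} is established; the analytic and homological content (Riemann--Roch and the logarithmic comparison theorem, together with the Bertini-type step) is entirely absorbed into the theorem, and the corollary is a bookkeeping step exploiting the particularly simple shape of the Chow ring of projective space. The substance of Corollary~\ref{prspace} is therefore not its proof but rather the observation that for $X = \Pbb^n$ no obstruction exists to upgrading the numerical identity of Theorem~\ref{main} to the full identity~\eqref{formula} in $A_*(X)$ --- an upgrade which, for a general nonsingular $X \subseteq \Pbb^N$, would require additional information about the kernel of the push-forward $j_*\colon A_*(X) \to A_*(\Pbb^N)$.
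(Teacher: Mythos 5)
Your proposal is correct and follows essentially the same route as the paper: the paper's proof likewise observes that for $X=\Pbb^n$ the maps $\alpha \mapsto \int \alpha \cap c_1(\mathscr{O}(1))^i$ identify each graded piece $A_i(\Pbb^n)$ with $\Zbb$, so the numerical equalities of Theorem~\ref{main} force equality of the classes themselves. Your additional remarks (taking $N=n$ so the push-forward is the identity, and the dual-basis computation $\int c_1(\mathscr{O}(1))^i \cap [\Pbb^k]=\delta_{ik}$) just spell out the same bookkeeping in more detail.
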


As a special case when $i=0$, both sides of formula ~\eqref{num} give the topological Euler characteristic of $U$. For a general $i$, this formula may be seen as an analogue for Chern classes of the computation of the cohomology of $U$ done by Castro et al. in \cite{MR1363009}.

In \cite{MR2928931}, Aluffi proves that formula ~\eqref{formula} is true for free arrangements of hypersurfaces that are locally analytically isomorphic to hyperplane arrangements; this includes the case of normal crossing divisors and of free hyperplane arrangements, verified earlier (\cite{MR1697199}, \cite{hyparr}). Aluffi's results along with the results in \cite{liao} have given convincing evidences why the freeness condition has to enter the picture when comparing these two classes.

It should be noted that in the context of hyperplane arrangement, the Chern polynomial of the sheaf of logarithmic vector fields has been compared with other naturally defined polynomials by several authors. Musta{\c{t}\u{a}} and Schenck related the Poincar\'{e} polynomial of a free arrangement to $c_t(\Omega^1_X(\log D))$, the Chern polynomial of the sheaf of logarithmic differentials\cite{MR1843320}. Aluffi related the characteristic polynomial of an arrangement to the Chern-Shwartz-MacPherson class of the arrangement complement, as well as the class in the Grothendieck ring of varieties $K_0(Var)$ of this complement\cite{hyparr}. The Poincar\'{e} polynomial and the characteristic polynomial of an arrangement are closely related\cite{MR1217488}. All these polynomials carry essentially the same information for free hyperplane arrangements. By Corollary~\ref{prspace}, the result in this paper recovers Aluffi's result for hyperplane arrangements \cite{hyparr}, and generalizes it to arbitrary quasi-homogeneous free hypersurfaces arrangements in projective space. Taking into account the relation between Poincar\'{e} polynomial and CSM class polynomial stated above, the result from this paper also gives a generalization of Musta{\c{t}\u{a}} and Schenck's result (Theorem 4.1 in \cite{MR1843320}) independent of the original proof.

While the freeness of the divisor is a nice relevant condition, it alone does not guarantee the truth of formula ~\eqref{formula}, as pointed out by our previous study\cite{liao}. The additional quasi-homogeneity condition on the divisor gives further control over the singularities. One famous result is: the Tjurina number being equal to the Milnor number characterizes quasi-homogeneity for isolated hypersurface singularities\cite{MR0294699}. More generally, The quasi-homogeneity of divisors can be described by certain properties of the Tjurina Algebras\cite{MR1385285}. In this paper, the quasi-homogeneity, together with the freeness of the divisor, are utilized to apply the logarithmic comparison theorem (LCT)\cite{MR1363009}, which implies the classes in ~\eqref{formula} have the same degree (the case $i=0$ in \eqref{num}).

As a result of our local analysis, we obtain a schematic version of Bertini's theorem for locally quasi-homogeneous divisors (see Corollary \ref{bertini}), strengthening Teissier's idealistic Bertini theorem (\cite{MR0568901} section 2.8) in this context. 

The remaining of this paper will prove theorem ~\ref{main} along the following roadmap:

\begin{enumerate}
\item With the help of Riemann-Roch and LCT, we show the classes appearing in ~\eqref{formula} have the same degree.
\item We show a result of Bertini type: the freeness and the quasi-homogeneity of the pair $(X, D)$ are preserved by intersecting a general hyperplane $H$. So this allows us to use LCT inductively on the new pair $(X \cap H,D \cap H)$.
\item We study the relation between $\textup{Der}_{X \cap H}(-\log D \cap H)$ and $c_1(\mathscr{O}(1)) \cdot c(\textup{Der}_X(-\log D))$, using an exact sequence which relates the sheaf of the logarithmic vector fields and the normal bundle. We also show $c_{SM}(1_U)$ behaves in a similar fashion.
\item We conclude our proof by an induction on the dimension of $X$.

\end{enumerate}
We will stick to the complex analytic category for all our discussions in this paper. Occasionally we may state a result for schemes. I am very grateful to J\"org Sch\"urmann, who showed me the connection between LCT and question \ref{q}. I feel greatly indebted to my advisor Paolo Aluffi, whose constant help and warm encouragement finally lead to this  work. I am also very grateful to the anonymous referee for the detailed comments on local analytic geometry and many good opinions on improving the presentation of this paper.

\section{The first step of the proof}
Let $D$ be a quasi-homogeneous free divisor in a complex algebraic projective variety $X$ of dimension $n$, and $U$ be the hypersurface complement $X \smallsetminus D$. In this section we prove $c_{SM}(1_U)$ and $c(\textup{Der}_X(-\log D))\cap [X]$, as cycle classes in the Chow ring of $X$, have the same degree. 

For any complex algebraic variety $X$, we can assign to it the group $C(X)$ of constructible functions on $X$ and the group $A(X)$ of algebraic cycles modulo the rational equivalence relation. Both assignments are functorial with respect to proper morphisms of varieties. There is a unique natural transformation (called MaPherson transformation) between these functors, taking the indicator function $1_X$ to the total Chern class of the tangent bundle $c(TX)\cap [X]$ for any nonsingular $X$. In this paper, the image of the function $1_U$ under this transformation is denoted by $c_{SM}(1_U)$. For more detailed information on Chern-Schwartz-MacPherson classes, see MacPherson's original paper \cite{MR0361141} and Example 19.1.7 in \cite{MR732620}.

We briefly recall the definition of free divisors and some of their basic properties\cite{MR586450}. Over an open subset $U$ (in the complex topology), let $f$ be a defining equation of the divisor $D$. In this open set, a logarithmic vector field along $D$ is a derivation $\theta \in \textup{Der}_X(U)$ satisfying $\theta f \in (f)$, and a logarithmic $p$-form with poles along $D$ is a meromorphic $p$-form $\omega \in \Omega_X^p(\star D)(U)$ satisfying $f \cdot \omega \in \Omega_X^p(U)$ and $f \cdot \mathrm{d} \omega \in \Omega_X^p(U)$.  This local information globalizes into coherent $\mathcal{O}_X$-modules $\textup{Der}_X(-\log D)$ and $\Omega_X^p(\log D)$. There is a natural dual pairing between $\textup{Der}_X(-\log D)$ and $\Omega_X^1(\log D)$, making them reflexive with respect to each other. A divisor is free if $\textup{Der}_X(-\log D)$ (and so $\Omega_X^1(\log D)$) is locally free. In this case, its rank equals $\dim X = n$.

According to the second of the defining properties of the logarithmic p-forms along $D$, the exterior differential makes the sheaves of logarithmic forms a complex $\Omega_X^{\bullet}(\log D)$. It is a subcomplex of $\Omega_X^{\bullet}(\star D)$, the complex of the sheaves of meromorphic forms with poles along $D$.  When $D$ is free, the sheaf of logarithmic p-forms is isomorphic to the pth exterior product of the sheaf of logarithmic 1-forms: $\Omega_X^p(\log D) \cong \Lambda^p\Omega_X^1(\log D)$.

Next we recall the definition of locally quasi-homogeneous divisors\cite{MR2500865} (See also \cite{MR1363009}).

\begin{defin}
A germ of divisor $(D,p) \subset (X,p)$ is quasi-homogeneous if there are local coordinates $x_1, \ldots , x_n \in \mathcal{O}_{X,p}$  with respect to which $(D,p)$ has a weighted homogeneous defining equation (with strictly positive weights). We also say $D$ is quasi-homogenous at $p$. A divisor $D$ in an $n$-dimensional complex manifold $X$ is locally quasi-homogeneous if the germ $(D,p)$ is quasi-homogeneous for each point $p \in D$. A germ of divisor $(D,p) \subset (X,p)$ is locally quasi-homogeneous if the divisor $D$ is locally quasi-homogeneous in a neighborhood of $p$.
\end{defin}

\begin{example}[\cite{MR1363009}]
Consider the surface $D \subset \Cbb^3$ given by $f(x,y,z) = x^5z + x^3y^3 + y^5z$. The germ of divisor $(D,0)$ is quasi-homogeneous with respect to the weight (1,1,1). However, $(D,0)$ is not locally quasi-homogeneous. In fact, $(D,p)$ is not quasi-homogeneous for any point $p$ on the $z$-axis other than the origin.
\end{example}

One benefit of considering locally quasi-homogenous free divisors is, that this class of divisors enjoys the following logarithmic comparison theorem (LCT):

\begin{theorem}[\cite{MR1363009}, see also \cite{MR2500865} for generalizations]\label{LCT}
Let $D$ be a locally quasi-homogeneous free divisor in the complex manifold $X$. Then the inclusion of complexes $\Omega_X^{\bullet}(\log D) \to \Omega_X^{\bullet}(\star D)$ is a quasi-isomorphism.
\end{theorem}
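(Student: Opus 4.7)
The plan is to verify the quasi-isomorphism stalkwise. Away from $D$ both complexes coincide with $\Omega_X^\bullet$, so fix a point $p\in D$ and let $Q^\bullet=\Omega_X^\bullet(\star D)/\Omega_X^\bullet(\log D)$; it suffices to show $Q^\bullet_p$ is acyclic. By local quasi-homogeneity I may choose analytic coordinates $x_1,\ldots,x_n$ centered at $p$ together with a local equation $f\in\mathcal{O}_{X,p}$ for $D$ that is weighted homogeneous of total weight $w>0$ with respect to positive weights $w_1,\ldots,w_n$. The associated Euler vector field $E=\sum_i w_i x_i\,\partial_{x_i}$ then satisfies $E(f)=wf$, so $E$ is a local section of $\textup{Der}_X(-\log D)$.

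Cartan's formula $\mathcal L_E=d\iota_E+\iota_E d$ exhibits the Lie derivative as a chain null-homotopy on $\Omega_X^\bullet(\star D)_p$, on $\Omega_X^\bullet(\log D)_p$, and hence on $Q^\bullet_p$, and it commutes with $d$. After passing to the $\mathfrak m_p$-adic completion (faithfully flat, hence detecting acyclicity of bounded complexes of coherent modules) each of these complexes decomposes into weight eigenspaces of $\mathcal L_E$. On any weight-$\lambda$ summand of $Q^\bullet_p$ with $\lambda\neq 0$, the operator $\lambda^{-1}\iota_E$ furnishes a contracting homotopy: if $\omega$ is closed in this summand then $\omega=\mathcal L_E(\omega)/\lambda=d(\iota_E\omega/\lambda)+\iota_E(d\omega)/\lambda$, and the second term is already logarithmic because $E$ is. So everything reduces to showing the weight-zero part of $Q^\bullet_p$ is acyclic.

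This final step is the crux, and is where freeness enters essentially. By Saito's criterion I can choose a basis $\delta_1,\ldots,\delta_n$ of $\textup{Der}_X(-\log D)_p$ consisting of weighted-homogeneous vector fields, with $\delta_1=E$; the dual basis $\omega_1,\ldots,\omega_n$ of $\Omega_X^1(\log D)_p$ is then also weighted homogeneous, and because $D$ is free the wedges $\omega_{i_1}\wedge\cdots\wedge\omega_{i_p}$ furnish a free basis of $\Omega_X^p(\log D)_p$. The plan is to argue by induction on the pole order $k$ that any weight-zero meromorphic form of the type $\omega/f^k$ is congruent, modulo $\Omega_X^\bullet(\log D)_p+d\,\Omega_X^\bullet(\star D)_p$, to one of pole order strictly less than $k$. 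The inductive step is a linear-algebra computation: expand $\omega$ in the Saito basis, use $E(f)=wf$ to rewrite the appearances of $df$ that obstruct logarithmicity, and solve a small linear system whose nonsingularity follows from the weight-zero hypothesis together with positivity of the weights.

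The hard part is exactly this weight-zero vanishing. Without quasi-homogeneity there is no Euler field at all and the whole weight-eigenspace machinery collapses; without freeness one loses both the Saito basis and the identification $\Omega_X^p(\log D)=\bigwedge^p\Omega_X^1(\log D)$, so the inductive bookkeeping in pole order breaks down. The two hypotheses together are precisely what makes the pole-order reduction terminate, and so precisely what makes the theorem hold.
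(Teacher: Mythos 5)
First, note that this statement is not proved in the paper at all: it is the logarithmic comparison theorem of Castro--Jim\'enez, Narv\'aez-Macarro and Mond, quoted from \cite{MR1363009} and used as a black box. So the relevant comparison is with the proof in that reference, which proceeds by induction on dimension: away from the distinguished point a locally quasi-homogeneous free divisor is locally a product $D'\times\Cbb$ with $D'$ again locally quasi-homogeneous and free, so the stalk statement holds there by induction; the point itself is then handled by the conical contraction coming from the positive weights together with cohomological arguments (hypercohomology on the punctured neighborhood and a duality/vanishing for the logarithmic complex that uses freeness). In particular the proof uses quasi-homogeneity \emph{at all nearby points}, not merely the Euler field at the chosen point.

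Your proposal has a genuine gap exactly at the step you yourself flag as the crux. The reduction to the weight-zero part is the easy half (and even there the bookkeeping is shaky: $\Omega_X^p(\star D)_p$ and the quotient $Q^p_p$ are not coherent, the differential is only $\Cbb$-linear, so the ``faithfully flat completion detects acyclicity'' argument does not apply as stated; one must instead justify the eigenspace decomposition of convergent germs by integrating the $\Cbb^*$-action or by working with the pole-order filtration). But the weight-zero acyclicity is asserted only as a plan: ``expand in the Saito basis \ldots solve a small linear system whose nonsingularity follows from the weight-zero hypothesis and positivity of the weights.'' No such system is exhibited and no reason is given why the pole-order reduction terminates; this is precisely where the theorem's difficulty lives, and the cited proof does not accomplish it by any punctual linear-algebra computation. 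Worse, your argument uses only quasi-homogeneity and freeness of the germ at the single point $p$; if it worked it would prove the stalkwise comparison at any point whose germ is quasi-homogeneous and free, with no hypothesis on neighboring points --- a statement strictly stronger than Theorem~\ref{LCT} (compare the paper's Example of $x^5z+x^3y^3+y^5z$, quasi-homogeneous at $0$ but not locally quasi-homogeneous), and one that is not available in the literature and is closely tied to the still-open question of exactly when LCT holds for free divisors. So the missing step is not a routine verification: either you must import the neighborhood information (as the original dimension-induction proof does) or supply a genuinely new argument for the weight-zero vanishing.
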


\begin{remark}\label{app}
Denote $j: U \to X$ the inclusion. The Grothendieck comparison theorem states the De Rham morphism:
\begin{equation*}
\Omega_X^{\bullet}(\star D) \to \mathbf{R}j_* \mathbf{C}_U
\end{equation*} 
is a quasi-isomorphism. Grothendieck's result together with theorem ~\ref{LCT} imply that, for locally quasi-homogeneous free divisors, the logarithmic De Rham complex computes the cohomology of $U$.
\end{remark}

\begin{remark}
In \cite{MR2500865}, it is pointed out that the Jacobian ideal $J_D$ of linear type, with the freeness of the divisor, is enough to imply LCT. This condition, which is purely algebraic, may be used for generalizing theorem ~\ref{main} in the future.
\end{remark}

Now, we can prove:
\begin{prop}\label{degree}
For locally quasi-homogeneous free divisors in nonsingular projective complex varieties, $c_{SM}(1_U)$ and $c(\textup{Der}_X(-\log D))\cap [X]$ have the same degree.
\end{prop}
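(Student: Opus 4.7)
The plan is to show that both degrees compute the topological Euler characteristic $\chi(U)$, using LCT on one side and Hirzebruch--Riemann--Roch on the other.

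First, I would combine Theorem~\ref{LCT} with the Grothendieck comparison theorem recalled in Remark~\ref{app}: together they give a quasi-isomorphism $\Omega_X^{\bullet}(\log D) \simeq \mathbf{R}j_{*}\mathbf{C}_U$, so the hypercohomology of the logarithmic de Rham complex on $X$ recovers $H^{*}(U;\mathbf{C})$. Since $X$ is smooth projective and the complex is a bounded complex of coherent sheaves, the Hodge-to-de Rham-type spectral sequence yields
\begin{equation*}
\chi(U) \;=\; \sum_{p} (-1)^{p}\,\chi\bigl(X, \Omega_X^{p}(\log D)\bigr).
\end{equation*}
Freeness of $D$ is now used twice: $\Omega_X^{1}(\log D)$ is locally free, hence so is each $\Omega_X^{p}(\log D) \cong \Lambda^{p}\Omega_X^{1}(\log D)$, and both are of rank equal to $n = \dim X$.

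Next I would apply Hirzebruch--Riemann--Roch term by term. Writing $E = \textup{Der}_X(-\log D)$ and using the standard K-theoretic identity for Chern roots $\beta_i$ of $E$,
\begin{equation*}
\sum_{p} (-1)^{p}\,\textup{ch}\bigl(\Lambda^{p} E^{\vee}\bigr) \;=\; \prod_{i}(1 - e^{-\beta_i}) \;=\; c_n(E)\cdot \textup{td}(E)^{-1},
\end{equation*}
I obtain
\begin{equation*}
\chi(U) \;=\; \int_X c_n(E)\cdot \frac{\textup{td}(TX)}{\textup{td}(E)}.
\end{equation*}
Because both Todd classes begin with $1$ and $c_n(E)$ already sits in top codimension, the higher-degree contributions of the ratio are killed, leaving $\chi(U) = \int_X c_n(\textup{Der}_X(-\log D))$. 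This is precisely the degree of $c(\textup{Der}_X(-\log D)) \cap [X]$, since only the top-dimensional component contributes to the degree.

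On the other side, $\int_X c_{SM}(1_U) = \chi(U)$ is one of the defining properties of Chern--Schwartz--MacPherson classes (pushforward to a point). Comparing the two computations proves the proposition. The subtlest point, and the one I would write out carefully, is the chain of identifications $\chi_{\textup{top}}(U) = \chi(\mathbf{R}\Gamma(U;\mathbf{C})) = \chi(\mathbb{H}^{*}(X, \Omega_X^{\bullet}(\log D))) = \sum_{p}(-1)^{p}\chi(X, \Omega_X^{p}(\log D))$; the first equality requires that $U$ has finitely generated cohomology (true because $U$ is a quasi-projective variety), and the last uses the hypercohomology spectral sequence for a bounded complex of coherent sheaves on a projective variety. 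The Chern character manipulation is then a routine splitting-principle computation, and no further input from quasi-homogeneity or freeness is needed beyond having already invoked LCT and the exterior-power identification.
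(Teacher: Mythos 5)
Your proposal is correct and follows essentially the same route as the paper: LCT plus the Grothendieck comparison to identify $\chi(U)$ with the Euler characteristic of the logarithmic de Rham hypercohomology, then Hirzebruch--Riemann--Roch together with the identity $\sum_p(-1)^p\mathrm{ch}(\Lambda^p E^{\vee})=c_n(E)\,\mathrm{td}(E)^{-1}$ (the paper cites Fulton, Example 3.2.5) and the observation that the Todd factors do not affect the top-degree term. The only cosmetic difference is that the paper spells out the intermediate step $\int_X c_{SM}(1_U)=\chi_c(U)=\chi(U)$, which you fold into the CSM pushforward property; for complex algebraic varieties these Euler characteristics agree, so nothing is missing.
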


\begin{proof}
By the functoriality of Chern-Schwartz-MacPherson transformation, we have: 

\begin{equation*}
\int_X \! c_{SM}(1_U) = \chi_c(U)
\end{equation*} 
where $\int_X$ denotes degree of the dimension ~$0$ component of the given class.

In the context of complex algebraic varieties, or compactifiable complex analytic manifold, it is well known that the Euler characteristic with compact support equals the usual Euler characteristic (\cite{MR2031639} Proposition 2.0.2), and the latter is computed by the logarithmic De Rham complex by remark ~\ref{app}:

\begin{equation*}
\begin{split}
\chi_c(U) &= \chi(U)\\
                 &= \sum_i (-1)^i H^i \left(\mathbf{R}\Gamma(X; \Omega_X^{\bullet}(\log D) \right)\\
                 &= \sum_{p,q} (-1)^{p+q} H^p (X; \Omega_X^q(\log D))\\
                 &= \sum_q (-1)^q \int_X \! \mathrm{ch}(\Omega_X^q(\log D)) \cap \mathrm{Td}(X)\\
                 &= \int_X \! \sum_q (-1)^q \mathrm{ch}(\Lambda^q \Omega_X^1(\log D)) \cap \mathrm{Td}(X)\\
                 &= \int_X \! c_n(\textup{Der}_X(-\log D)) \cdot \mathrm{Td}(\textup{Der}_X(-\log D))^{-1} \cap \mathrm{Td}(X)\\
                 &= \int_X \! c_n(\textup{Der}_X(-\log D)) \cap [X]\\
                 &= \int_X \! c(\textup{Der}_X(-\log D)) \cap [X]\\
\end{split}
\end{equation*}
The fourth of these equalities comes from Riemann-Roch; the fifth one comes from the fact that $\Omega_X^p(\log D) \cong \Lambda^p\Omega_X^1(\log D)$ for free divisors; the sixth uses a formula showed in \cite{MR732620} Example 3.2.5; for the seventh, just notice that $c_n(\textup{Der}_X(-\log D)) \cap [X]$ is a 0-dimensional cycle and the Todd class starts from $1+\ldots$ for any vector bundle.

\end{proof}

\section{The second step of the proof}
In this section we prove the freeness and the quasi-homogeneity of the pair $(X,D)$ is preserved by intersecting with a general hyperplane of the ambient projective space $\Pbb^N$. For this purpose, we first recall Saito's logarithmic stratification (see \cite{MR586450} lemma 3.2), which we will use in our definition of transversal intersections.

\begin{lemma}
Let $X$ be an $n$-dimensional complex manifold and $D$ a divisor in it. There exists uniquely a stratification $\{D_\alpha, \alpha \in I\}$ of $X$ with the following properties:

\begin{enumerate}
\item Stratum $D_\alpha$, $\alpha \in I$ is a smooth connected immersed submanifold of $X$. $X$ is a disjoint union $\coprod_{\alpha \in I}D_\alpha$ of the strata.
\item let $p \in X$ belong to a stratum $D_\alpha$. Then the tangent space $T_{D_\alpha,p}$ of $D_\alpha$ at $p$ coincides with the subspace $\textup{Der}_X(-\log D)(p) \subset T_{X,p}$.
\item if $D_\alpha \cap \overline{D}_\beta \neq \emptyset$, then $D_\alpha \subset \partial D_\beta$.
\end{enumerate} 
\end{lemma}

In the lemma, each $D_\alpha$ is in fact a maximal integral submanifold of the involutive distribution determined by $\textup{Der}_X(-\log D)$. With the help of these logarithmic stratum, we can now define transversal intersection of $D$ with nonsingular varieties.

\begin{defin}
Let $X$ and $Y$ be complex analytic submanifolds of some ambient space, and $D$ is a divisor in $X$. We say that the divisor $D$ intersects $Y$ transversally at $p \in D \cap Y$ if $T_{D_\alpha,p}$ intersects $T_{Y,p}$ transversally where $D_\alpha$ is the unique stratum containing $p$. We say that the divisor $D$ intersects $Y$ transversally if at all points of $D \cap Y$ they intersect transversally.
\end{defin}

\begin{remark}
E. Faber has recently studied transversality for singular hypersurfaces in \cite{faber}. It turns out that our definition of transversal intersection overlaps her definition of ``splayed divisors" when one ``splayed component" is a nonsigular hypersurface.
\end{remark}

The following propositions show that quasi-homogeneity and freeness are preserved by transversal intersection. 

\begin{prop}\label{quasi}
Let $X \subset \Pbb^N$ be an $n$-dimensional nonsingular complex projective variety and $D$ a locally quasi-homogeneous divisor in $X$. Let $H$ be a hyperplane of $\Pbb^N$ intersecting both $D$ and $X$ transversally. Then $D \cap H$ is a locally quasi-homogeneous divisor in the nonsingular $X \cap H$.
\end{prop}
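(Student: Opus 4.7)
The plan is to argue locally at each $q \in D \cap H$, using the triviality lemma to decompose the germ $(D, q)$ and invoking transversality to identify one of the trivial factors with the direction normal to $H$.

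At such a point $q$, I apply Lemma~\ref{triv} to $(D, q)$ to find local coordinates $(x_1, \dots, x_{n-k}, y_1, \dots, y_k)$ on $X$ at $q$, with $k = t(q)$, in which $D$ is cut out by $h(x) = 0$ and $TD_q = \textup{span}(\partial_{y_1}, \dots, \partial_{y_k})$. The transversality hypothesis $TD_q + TH_q = T\Pbb^N_q$, together with $TD_q \subset TX_q$, gives by the modular law $TD_q + T(X \cap H)_q = TX_q$. Hence a local defining equation $l$ of $X \cap H$ in $X$ must satisfy $\partial l/\partial y_j(q) \ne 0$ for some $j$, which I may take to be $j = k$. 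The implicit function theorem lets me switch from $y_k$ to $y'_k := l$ without affecting $h$, so that $X \cap H = \{y'_k = 0\}$ and $D \cap H$ is cut out in $X \cap H$ by $h(x) = 0$. Thus $(D \cap H, q) \cong (D' \times \Cbb^{k-1}, 0)$, where $D' = \{h = 0\} \subset \Cbb^{n-k}$.

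The main obstacle is to show $(D' \times \Cbb^{k-1}, 0)$ is quasi-homogeneous. This reduces to proving that $(D', 0) \subset (\Cbb^{n-k}, 0)$ is quasi-homogeneous, since adjoining any positive weights on the remaining $\Cbb^{k-1}$ preserves weighted homogeneity. I would extract an Euler vector field for $D'$ from the one for $D$: after rescaling by the unit connecting the given weighted homogeneous defining equation $f$ of $D$ with the triviality-coordinate equation $h$, one obtains a vector field $\tilde\chi$ on $(X, q)$ with $\tilde\chi(h) = h$, $\tilde\chi(0) = 0$, and positive linearization. Decomposing $\tilde\chi = \sum a_i(x,y) \partial_{x_i} + \sum b_j(x,y) \partial_{y_j}$ in the triviality coordinates and setting $y = 0$ produces a vector field $\hat\chi = \sum a_i(x, 0) \partial_{x_i}$ on $\Cbb^{n-k}$ satisfying $\hat\chi(h) = h$. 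The hard part will be to ensure $\hat\chi$ has strictly positive linearization at the origin, as a principal minor of a matrix with positive eigenvalues need not inherit positivity. I expect this to require a preliminary analytic change of coordinates compatible with the product decomposition $D = D' \times \Cbb^k$, after which a Poincar\'e--Dulac linearization of $\hat\chi$ yields weighted homogeneous coordinates for $h$ with strictly positive weights.
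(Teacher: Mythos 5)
Your reduction to the germ level is fine: the use of the triviality coordinates, the modular-law computation giving $TD_q + T(X\cap H)_q = TX_q$, and the coordinate change replacing $y_k$ by the local equation $l$ of $X\cap H$ correctly exhibit $(D\cap H,q)\cong (D'\times\Cbb^{k-1},0)$ with $D'=\{h=0\}$ (this part parallels the paper's argument for preservation of \emph{freeness}, Proposition~\ref{free}). The genuine gap is exactly the step you flag and then leave open: the quasi-homogeneity of $(D',0)$. Your plan — restrict the unit-normalized Euler field $\tilde\chi$ to $y=0$, drop its $y$-components, and linearize $\hat\chi$ by Poincar\'e--Dulac — founders precisely where you suspect: the linear part of $\hat\chi$ is the $x$-$x$ principal block of the linear part of $\tilde\chi$, and a principal block of a matrix with positive eigenvalues need not have eigenvalues with positive real part (e.g.\ $\left(\begin{smallmatrix}-1 & -1\\ 6 & 4\end{smallmatrix}\right)$ has eigenvalues $1,2$ but upper-left block $-1$), so the hypotheses of a Saito/Poincar\'e--Dulac quasi-homogenization theorem are not verified. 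The ``preliminary analytic change of coordinates compatible with the product decomposition'' that you hope will fix this is not a minor adjustment: producing quasi-homogeneous coordinates adapted to the trivial factors is essentially equivalent to the statement being proved, so as written the argument is circular at its core.

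The paper avoids this entirely by transporting the slice rather than the vector field. Work in the coordinates furnished by local quasi-homogeneity, in which $D$ has a weighted homogeneous equation $f$ with strictly positive weights. Transversality (in the sense of the definition via $TD_q$) supplies a non-vanishing vector field $\theta$ with $\theta f\in(f)$ and $\theta_q\notin T(X\cap H)_q$; choose a coordinate hyperplane $V(x_0)$ with $\theta_q\notin TV(x_0)_q$. The flow of $\theta$ preserves $D$ and maps $X\cap H$ biholomorphically onto $V(x_0)$ near $q$, hence carries the pair $(X\cap H,\,D\cap H)$ onto $(V(x_0),\,V(f,x_0))$; and $f(0,x_1,\dots)$ is weighted homogeneous with the same strictly positive weights, so no linearization or normal-form argument is needed. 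If you want to salvage your write-up, replace the Euler-field-restriction step by this flow argument (you already have the transversal logarithmic vector field in hand); the triviality-lemma bookkeeping in your first half can then be dispensed with for this proposition.
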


\begin{proof}
At any point $p \in H \cap D$, the transversal intersection of $D$ and $H$ indicates that we can find a non-vanishing logarithmic vector field $\theta$ at a neighborhood of $p$ such that $H$ is transversal to $\theta_p$ at $p$. By \cite{MR1363009} Lemma 2.3 there exists a coordinate system locally at $p$ such that $(D,p) \cong (D' \times \mathbb{C}, (0,0))$ and $(D',0)$ is a quasi-homogeneous germ in $\mathbb{C}^{n-1}$. In this coordinate system, $H \cap X$ becomes a nonsingular hypersurface $H'$ transversal to the trivial factor. Projecting along the trivial factor gives a local isomorphism between the pairs $(\mathbb{C}^{n-1}, D') \cong (H', (D' \times \mathbb{C}) \cap H')$. 
\end{proof}

\begin{prop}\label{free}
Let $X \subset \Pbb^N$ be an $n$-dimensional nonsingular complex projective variety, $D$ a free divisor in $X$, and $H$ a hyperplane of $\Pbb^N$ intersecting both $D$ and $X$ transversally. Then $D \cap H$ is a free divisor in the nonsingular $X \cap H$.
\end{prop}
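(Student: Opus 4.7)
The plan is to adapt the local-analytic argument from Proposition~\ref{quasi} to deliver freeness instead of quasi-homogeneity, using the Triviality lemma to produce a product decomposition of $D$ near each point of $D \cap H$. Freeness is a local property, so it is enough to show that for every $p \in D \cap H$ the pair $(X \cap H, D \cap H)$ is locally isomorphic near $p$ to a pair in which the divisor is free.

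First I would use the transversality hypothesis exactly as in Proposition~\ref{quasi}: at $p$, the subspace $TD_p$ is not contained in $T(X \cap H)_p$, so on a neighborhood of $p$ there is a nowhere vanishing logarithmic vector field $\theta$ (i.e., a local section of $\textup{Der}_X(-\log D)$) whose value at $p$ is not in $T(X \cap H)_p$. Straightening $\theta$ to $\partial_{x_0}$ in suitable local coordinates $x_0, x_1, \ldots, x_{n-1}$ on $X$, and invoking Lemma~\ref{triv} with $m=1$ and $y_1 = x_0$, one obtains a local biholomorphism under which the defining equation of $D$ takes the form $h(x) = v(x)\,h_0(x_1, \ldots, x_{n-1})$ with $v$ a unit. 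This exhibits $D$ locally as a cylinder $D_0 \times \Cbb$, where $D_0 = V(h_0)$ is a divisor in the coordinate hyperplane $V(x_0)$.

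The next step is to transfer freeness of $D$ to freeness of $D_0$. In the cylinder presentation the logarithmic sheaf splits as
$$\textup{Der}_X(-\log D) \;\cong\; \pi^* \textup{Der}_{V(x_0)}(-\log D_0) \,\oplus\, \mathcal{O}_X \cdot \partial_{x_0},$$
where $\pi: V(x_0) \times \Cbb \to V(x_0)$ is the projection: any local derivation $\sum a_i \partial_{x_i} + b\,\partial_{x_0}$ is logarithmic for $h$ if and only if $\sum a_i \partial_{x_i}$ is logarithmic for $h_0$, while $b$ is unconstrained. Since $D$ is free, the left hand side is locally free; restricting to the slice $x_0 = 0$ then forces $\textup{Der}_{V(x_0)}(-\log D_0)$ to be locally free, so $D_0$ is free in $V(x_0)$.

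Finally, mimicking the flow argument in Proposition~\ref{quasi}, the integral flow of $\theta$ provides a local biholomorphism from a neighborhood of $p$ in $X \cap H$ onto a neighborhood of $0$ in $V(x_0)$; because $\theta$ is logarithmic the flow preserves $D$, so this biholomorphism restricts to an isomorphism of pairs $(X \cap H, D \cap H) \cong (V(x_0), D_0)$ near $p$. Transporting the freeness of $D_0$ across this isomorphism yields freeness of $D \cap H$ in $X \cap H$ at $p$, and since $p \in D \cap H$ was arbitrary the proposition follows. The step I expect to require the most care is the sheaf splitting in the cylinder case; it is conceptually routine but relies essentially on the fact, guaranteed by the Triviality lemma, that $h_0$ is independent of $x_0$ and $v$ is a unit.
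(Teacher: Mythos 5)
Your proposal is correct and follows essentially the same route as the paper: use the transversality hypothesis to produce a nonvanishing logarithmic vector field $\theta$ transverse to $X \cap H$ at $p$, apply Lemma~\ref{triv} to exhibit $D$ locally as a cylinder $D_0 \times \Cbb$ along $\theta$, use the splitting $\textup{Der}(-\log D_0 \times \Cbb) \cong \pi^*\textup{Der}(-\log D_0) \oplus \mathcal{O}$ to transfer freeness to $D_0$, and identify the pair $(X\cap H, D\cap H)$ with $(V(x_0), D_0)$ via the flow of $\theta$. The only (harmless) variation is that you deduce freeness of $D_0$ by restricting the splitting to the slice $x_0=0$ and citing that a direct summand of a locally free sheaf is locally free, where the paper instead invokes Saito's criterion for the same splitting.
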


\begin{proof}
cf. \cite{MR1363009} Lemma 2.2.
\end{proof}

\begin{remark}
The projective space $\Pbb^N$ and the hyperplane $H$ play no role in the local analysis. The propositions will remain true if we replace $\mathbb{P}^N$ by any nonsingular ambient space, and replace $H$ by any complex manifold transversal to $D$.
\end{remark}

We need the next proposition in our dimension counting argument.

\begin{prop}\label{decom}
Set $D_i = \{ p \in D \ | \ \textup{Rank}_{\mathbb{C}} \textup{Der}_X(-\log D)(p) = i \}$ for $i = 0, \ldots n-1$. Then for a locally quasi-homogeneous divisor $D$, $D_i$ is an analytic set of dimension at most $i$. 
\end{prop}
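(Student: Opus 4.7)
The plan is to treat analyticity and the dimension bound by different arguments. For analyticity, I would identify $TD_p$ with the image of the evaluation map $\textup{Der}_X(-\log D)_p \to T_p X$: germs $\theta$ with $\theta_p \neq 0$ are non-vanishing near $p$ and contribute $\theta_p$ to $TD_p$ by definition, while germs with $\theta_p = 0$ contribute nothing. Since $\textup{Der}_X(-\log D)$ is coherent, the set $\{p : t(p) \leq i\}$ is cut out locally by the vanishing of the $(i+1) \times (i+1)$ minors of any matrix presenting this evaluation, so it is closed analytic; consequently $D_i$ is the locally closed analytic difference of two such sets.

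For the dimension bound I would fix a point $p \in D_i$ and combine two coordinate systems centered at $p$. The triviality lemma, applied to the maximal trivial factorization $(D, p) \cong (D' \times \Cbb^i, (0, 0))$ coming from $t(p) = i$, yields coordinates $(x_1, \ldots, x_{n-i}, z_1, \ldots, z_i)$ in which $\partial_{z_1}, \ldots, \partial_{z_i}$ are $i$ nowhere-vanishing logarithmic vector fields. Independently, local quasi-homogeneity of $D$ at $p$ yields coordinates $y_1, \ldots, y_n$ for which $D$ has a quasi-homogeneous defining equation with positive weights $w_1, \ldots, w_n$; the Euler vector field $\chi = \sum_l w_l y_l \partial_{y_l}$ is then logarithmic and vanishes only at $p$. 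The key observation is that for any $q \in D_i$ a dimension count forces $TD_q = \textup{span}(\partial_{z_1, q}, \ldots, \partial_{z_i, q})$, and for $q \neq p$ the nonzero value $\chi_q$ lies in $TD_q$ since $\chi$ is non-vanishing near $q$. Writing $\chi = \sum_j A_j(x, z) \partial_{x_j} + \sum_k B_k(x, z) \partial_{z_k}$, this forces $A_1(q) = \cdots = A_{n-i}(q) = 0$, and the analogous equation at $q = p$ is trivial, so locally $D_i \subseteq Z := \{A_1 = \cdots = A_{n-i} = 0\}$.

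To conclude, I would verify that $Z$ is smooth of dimension $i$ at $p$. From $A_j(y) = \chi(x_j)(y) = \sum_l w_l y_l (\partial x_j / \partial y_l)(y)$, one sees that the Jacobian of $(A_1, \ldots, A_{n-i})$ at $y = 0$ is the upper $(n-i) \times n$ block of the (invertible) Jacobian of $y \mapsto (x, z)$ right-multiplied by $\textup{diag}(w_1, \ldots, w_n)$; both factors have full rank, so the product has rank $n - i$. The implicit function theorem then presents $Z$ as an $i$-dimensional submanifold near $p$, so $\dim_p D_i \leq i$. The main obstacle lies precisely in this last step: one needs the intrinsic condition $\chi_q \in TD_q$ to translate into $n - i$ scalar equations with independent differentials, and this hinges on the invertibility of the coordinate change between $y$ and $(x, z)$ together with the positivity of the weights $w_l$.
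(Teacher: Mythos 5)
Your proof is correct, but it takes a genuinely different route from the paper's. The paper argues by induction on $\dim X$: the Euler relation $df=\sum_l a_l x_l\partial_{x_l}f$ shows $t(q)\ge 1$ for $q\ne p$, so $D_0$ is isolated; at a point with $t(p)\ge 1$ it splits off a trivial factor $(D,p)\cong(D'\times\Cbb,0)$, invokes (the proof of) Proposition~\ref{quasi} to see that the slice $D'$ is again locally quasi-homogeneous, and identifies the strata $D'_i\times\Cbb$ with $D_{i+1}$ near $p$ to propagate the dimension bound. You instead argue directly at each $p\in D_i$: the $i$ trivializing fields $\partial_{z_k}$ span $TD_q$ exactly for every nearby $q\in D_i$ (dimension count), so membership of the logarithmic Euler field $\chi$ in $TD_q$ forces $A_j=\chi(x_j)=0$ on $D_i$, and the Jacobian identity $[\partial A_j/\partial y_m(0)]=[\partial x_j/\partial y_m(0)]\cdot\mathrm{diag}(w_1,\dots,w_n)$, of rank $n-i$ by positivity of the weights and invertibility of the coordinate change, traps $D_i$ locally inside a smooth $i$-dimensional submanifold. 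Your route buys several things: no induction, no reliance on the implicit compatibility $t_{D'\times\Cbb}=t_{D'}+1$ used when matching $D'_i\times\Cbb$ with $D_{i+1}$, and an explicit treatment of the analyticity claim (coherence of $\textup{Der}_X(-\log D)$ and minors of the evaluation map, identifying $t(p)$ with the rank of evaluation since tangency to $D$ at smooth points is equivalent to being logarithmic for reduced $D$), which the paper's proof does not address; as you note, $D_i$ is then only locally closed analytic, which is the accurate reading of the statement. What the paper's induction buys is economy, reusing the transversal-slice machinery of this section. The one ingredient you should flag is the factorization $(D,p)\cong(D'\times\Cbb^{i},(0,0))$ with exactly $i$ trivial factors, which is what the paper's definition of $t(p)$ asserts via iterated integration after Lemma~\ref{triv}; alternatively your argument runs unchanged with any $i$ logarithmic fields independent at $p$, replacing the equations $A_j=0$ by the vanishing of the $(i+1)\times(i+1)$ minors of $[\theta_1,\dots,\theta_i,\chi]$.
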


\begin{proof}
For locally quasi-homogeneous free divisors, the result is immediate. It is known that locally quasi-homogeneous free divisors are Koszul free and Koszul free divisors are holonomic (the logarithmic strata are locally finite) in the sense of Saito \cite{MR1931962}. According to \cite{MR586450} $dim_\mathbb{C}D_i \leq i$ for holonomic divisors.

For locally quasi-homogeneous divisors without the freeness assumption, the author is not able to find a direct reference. To check the conclusion of the proposition directly we can choose a local isomorphism $(D,p) \cong (D' \times \mathbb{C}^{i}, (0,0))$ at $p \in D_i$ with $D'$ a quasi-homogeneous divisor in an open subset of $\mathbb{C}^{n-i}$. We see that this step reduces the question to the case $i=0$, which is immediate because of the existence of the local Euler vector field.
 
\end{proof}

So far we have shown if the hyperplane $H \subset \Pbb^N$ cuts both $X$ and the locally quasi-homogenous free divisor $D$ transversally, then we can produce a nonsingular $X \cap H$ with locally quasi-homogeneous free diviosr $D \cap H$. To obtain the Bertini type result, we only need to count the dimension of the hyperplanes which fail to intersect $D$ or $X$ transversally. The set of  ``bad" hyperplanes which fail to cut $X$ transversally has dimension $N-1$: the result of the classical Bertini theorem. We now show:

\begin{prop}
Let $D$ be a locally quasi-homogeneous divisor (or more generally a holonomic divisor). For any $i$, $0 \leq i \leq n-1$, the set of hyperplanes that fail to intersect $D$ transversally at points from $D_i$ has dimension at most $N-1$.
\end{prop}

\begin{proof}
Denote by $P^N$ the projective space of hyperplanes in $\Pbb^N$. Consider the subspace $W_i$ of $D_i \times P^N$ consisting of pairs $(p, H)$ such that $H$ does not meet $D$ transversally at $p$. Also set $\pi_{ij}$ the projection from $W_i$ to the $j$th factor, $j = 1, 2 $. For any $p \in D_i$, the fiber $\pi_{i1}^{-1}(p)$ consists of hyperplanes of $\Pbb^N$ containing $TD_p$. Here we identify a hyperplane with its tangent space at any of its points. Because $dim(TD_p) = i$, we get $dim(\pi^{-1}(p)) = N - i -1$. Combining with proposition ~\ref{decom}, we get $dim(W_i) \leq (N-1-i) + i = N-1$. So $dim(\pi_{i2}(W)) \leq N-1$.
\end{proof}

\begin{corol}
The general hyperplanes in $\Pbb^N$ intersect $X$ and $D$ transversally at the same time. So the local quasi-homogeneity and the freeness of divisors is preserved by intersecting with general hyperplanes.
\end{corol}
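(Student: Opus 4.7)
The plan is to combine the dimension estimates from the two preceding propositions with Propositions~\ref{quasi} and \ref{free}. Write $P^N$ for the dual projective space parametrizing hyperplanes of $\Pbb^N$; this is an irreducible variety of dimension $N$, so in order to show that a general hyperplane is good it suffices to produce a proper closed (or at least a countable union of) subvarieties of dimension $\leq N-1$ containing all the ``bad" hyperplanes.

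First I would collect the bad locus coming from $X$: by the classical Bertini theorem, the locus $B_X \subset P^N$ of hyperplanes not meeting $X$ transversally has dimension at most $N-1$. Next, using Proposition~\ref{decom}, I would decompose $D = \coprod_{i=0}^{n-1} D_i$ according to the triviality index; this is a finite stratification. The proposition immediately preceding the corollary gives, for each $i$, a bad locus $B_i \subset P^N$ of hyperplanes failing to meet $D$ transversally at some point of $D_i$, and $\dim B_i \leq N-1$. Taking the finite union
\begin{equation*}
B := B_X \cup \bigcup_{i=0}^{n-1} B_i \subset P^N,
\end{equation*}
we still have $\dim B \leq N-1 < N$, so the complement $P^N \setminus B$ is dense in $P^N$ and any $H$ in this complement meets both $X$ and $D$ transversally in the sense of the definition given above.

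Once such a general $H$ is chosen, applying Proposition~\ref{quasi} shows that $D \cap H$ is a locally quasi-homogeneous divisor in the (by classical Bertini, nonsingular) variety $X \cap H$, and applying Proposition~\ref{free} shows that $D \cap H$ is free in $X \cap H$. This yields the conclusion of the corollary.

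There is essentially no hard step here: the local analysis has already been done in the two propositions above, and the corollary is purely a dimension-counting packaging. The only point to be careful about is the bookkeeping of the stratification $D = \coprod D_i$ provided by Proposition~\ref{decom}, so that the union of bad loci over the strata remains a finite union and hence still has dimension at most $N-1$; this is exactly where the finiteness of the range $i = 0, \ldots, n-1$ (with $n = \dim X$) is used.
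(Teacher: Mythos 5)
Your proposal is correct and matches the paper's argument: the paper's proof of the corollary is exactly the observation that the union $\cup_i \pi_{i2}(W_i)$ of bad loci over the finitely many strata $D_i$ (together with the Bertini bad locus for $X$) has dimension at most $N-1$, after which Propositions~\ref{quasi} and \ref{free} give the preservation of quasi-homogeneity and freeness. No discrepancies to report.
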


\begin{proof}
$dim(\cup \pi_{i2}(W_i)) \leq N-1$
\end{proof}

As an application of the ideas discussed in this section, we prove a stronger version of Teissier's idealistic Bertini's theorem in the context of locally quasi-homogeneous divisors. Recall the following version of idealistic Bertini theorem stated in \cite{MR2904577} Lemma 30. Let $X=\Pbb^n$ and $D$ is a hypersurface in $X$. For a sufficient general hyperplane $H$ of $X$, the ideal of $D^{s} \cap H$ is integral over the ideal of $(D \cap H)^{s}$. Here $(\cdot)^{s}$ denotes the singular analytic subspace (or subscheme) of the given analytic space (or scheme). For a hypersurface with a local equation $f$, the ideal of the singular subspace is locally generated by all partial derivatives of $f$ as well as $f$ itself. In Teissier's original approach, however, the equation of the hypersurface $f$ did not appear in the definition of the singular analytic subspace. The seemingly difference occurs because Teissier treated the singular subspace as a subspace of $D$ whereas we treat the singular subspace as a subspace of $X$. For a general analytic space (or scheme), the ideal of the singular subspace can be described in terms of the fitting ideals of the sheaf of K\"{a}hler differentials. Note in particular the idealistic Bertini theorem implies that $D^{s} \cap H$ and $(D \cap H)^{s}$ have identical underlying topological spaces, although they might not be identical as ringed spaces.

\begin{corol}\label{bertini}
Let $X \subset \Pbb^N$ be an $n$-dimensional nonsingular complex projective variety and $D$ a locally quasi-homogeneous divisor in $X$. Let $H$ be a sufficient general hyperplane of $\Pbb^N$. Then $D^{s} \cap H = (D \cap H)^{s}$ as analytic subspaces (or subschemes) of $D$. 
\end{corol}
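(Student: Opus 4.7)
The plan is to verify the claimed equality locally at each point $p \in D \cap H$ by setting up coordinates in which both sides have manifestly the same defining ideal. Since $H$ is sufficiently general, the preceding results of Section~3 ensure that $H$ meets $D$ transversally at every point of $D \cap H$, so at each such $p$ there is a germ of a non-vanishing holomorphic vector field $\theta$ on $(X,p)$ with $\theta(f) \in (f)$ (where $f$ is a local defining equation of $D$) and $\theta_p \notin T_p H$. Exactly as in the proof of Proposition~\ref{quasi}, I would use the flow of $\theta$ applied to a chart on the transversal section $H \cap X$ to produce local coordinates $x_0, x_1, \ldots, x_{n-1}$ on $(X,p)$ in which $H \cap X = V(x_0)$ and $\theta = \partial_{x_0}$. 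In these coordinates the tangency condition reads
\begin{equation*}
\partial_{x_0} f \;=\; h \cdot f
\end{equation*}
for some holomorphic germ $h \in \mathcal{O}_{X,p}$.

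Writing $\bar{g}$ for the image of a germ $g$ in $\mathcal{O}_{H \cap X, p} = \mathcal{O}_{X,p}/(x_0)$, the ideal cutting out $D^{s} \cap H$ inside $H \cap X$ at $p$ is
\begin{equation*}
\bar{I} \;=\; \bigl(\bar{f},\, \overline{\partial_{x_0} f},\, \overline{\partial_{x_1} f},\, \ldots,\, \overline{\partial_{x_{n-1}} f}\bigr),
\end{equation*}
while the ideal cutting out $(D \cap H)^{s}$ inside $H \cap X$ at $p$ is
\begin{equation*}
\bar{J} \;=\; \bigl(\bar{f},\, \partial_{x_1}\bar{f},\, \ldots,\, \partial_{x_{n-1}}\bar{f}\bigr) \;=\; \bigl(\bar{f},\, \overline{\partial_{x_1} f},\, \ldots,\, \overline{\partial_{x_{n-1}} f}\bigr),
\end{equation*}
the second equality because $x_1, \ldots, x_{n-1}$ are independent of $x_0$. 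The inclusion $\bar{J} \subseteq \bar{I}$ is clear from inspection; for the reverse, restricting the tangency relation $\partial_{x_0} f = h f$ to $x_0 = 0$ yields $\overline{\partial_{x_0} f} = \bar{h}\, \bar{f} \in (\bar{f}) \subseteq \bar{J}$, which forces $\bar{I} \subseteq \bar{J}$.

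The essential role of the quasi-homogeneity hypothesis is to guarantee, through Proposition~\ref{quasi} and the subsequent dimension count, the existence at every singular point of $D$ of a triviality vector field transverse to a generic hyperplane; once this vector field is put in the form $\partial_{x_0}$ and $H \cap X$ is cut out by $x_0$ in the same chart, the ideal comparison is immediate and is exactly the scheme-theoretic sharpening of Teissier's integral-closure statement. I do not expect a serious obstacle beyond justifying the simultaneous straightening of $\theta$ and $H \cap X$, which is handled precisely as in the proof of Proposition~\ref{quasi} by taking the flow parameter of $\theta$, applied to a chart on the transversal section, as the coordinate $x_0$.
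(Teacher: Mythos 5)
Your proof is correct and takes essentially the same route as the paper: the genericity results of Section 3 give transversality at every point of $D\cap H$, and the local straightening along the logarithmic vector field (which the paper phrases through the product structure $D\cong D'\times\Cbb$ and $D^{s}=(D')^{s}\times\Cbb$, and you phrase through the relation $\partial_{x_0}f=hf$ and a direct comparison of Jacobian ideals) yields the equality of ideals. The only point to add, as the paper does, is that the scheme-theoretic form of the statement follows from the analytic one by the GAGA principle.
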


\begin{proof}
By the previous corollary we know that a sufficient general hyperplane intersect $D$ transversally. Thus locally we reduce to the case $D= D' \times \Cbb$ and $H$ is ``perpendicular" to the trivial factor. It is clear then $D^{s} = (D')^{s} \times \Cbb$. So $D^{s} \cap H = (D')^{s} = (D \cap H)^s$. The statement about subschemes is obtained from GAGA principle.
\end{proof}

\section{ The third step of the proof}
In this section, if not otherwise specified, $X \subset \Pbb^N$ is a smooth complex projective variety, $D$ is a free divisor in $X$, $U$ is the complement of $D$ in $X$, and $H$ is a hyperplane in $\Pbb^N$ intersecting $X$ and $D$ transversally. Also denote by $X'$, $D'$ and $U'$ the intersection of $H$ with $X$, $D$ and $U$ respectively. We derive a formula relating $c(\textup{Der}_X(-\log D)) \cap [X]$ and $c(\textup{Der}_{X'}(-\log D')) \cap [X']$. Replacing $c(\textup{Der}_X(-\log D)) \cap [X]$ by $c_{SM}(1_U)$, $c(\textup{Der}_{X'}(-\log D'))\cap [X']$ by $c_{SM}(1_{U'})$, the same formula is true.

For closed embeddings of nonsingular varieties, we know the normal bundle on the subvariety is the quotient of the tangent bundles. It turns out in our current setting, the quotient of the sheaves of logarithmic vector fields also defines the normal bundle.

\begin{prop}
Denote $N$ the normal bundle on $X'$ to $X$, and $i: X' \to X'$ the closed embedding. We have the exact sequence of vector bundles
\begin{equation*}
0 \to \textup{Der}_{X'}(-\log D') \to i^{*}(\textup{Der}_X(-\log D)) \to N \to 0
\end{equation*}
\end{prop}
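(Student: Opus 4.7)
The plan is to construct the map canonically, verify exactness in the local coordinates supplied by the structure theorems of Section~3, and glue. At each point $p \in D'$, transversality of $H$ and $D$ combined with the triviality lemma (Lemma~\ref{triv}) produces local coordinates $(x_1, \ldots, x_{n-1}, y)$ on $(X,p)$ in which $D = \{f(x) = 0\}$ is independent of $y$, so that $\partial_y$ is a log vector field tangent to $D$ everywhere; a further shear $(x,y) \mapsto (x, y - g(x))$ straightens $X'$ to $\{y = 0\}$ without disturbing the $y$-independence of $f$. In these coordinates, Saito's criterion gives a basis $\delta_1, \ldots, \delta_{n-1}$ of $\textup{Der}_{\Cbb^{n-1}}(-\log D)$, and $\{\delta_1, \ldots, \delta_{n-1}, \partial_y\}$ is then a basis of $\textup{Der}_X(-\log D)$.

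I define the map canonically by composing the intrinsic inclusions $\textup{Der}_{X'}(-\log D') \hookrightarrow T_{X'} \hookrightarrow i^*T_X$ and checking locally that the image lies in the subsheaf $i^*\textup{Der}_X(-\log D)$. Off $D'$ this is automatic, since both log sheaves coincide with tangent sheaves. On $D'$ the local computation above yields
\begin{equation*}
i^*\textup{Der}_X(-\log D) \;=\; \textup{Der}_{X'}(-\log D') \,\oplus\, \mathcal{O}_{X'} \partial_y|_{X'},
\end{equation*}
so the image of $\textup{Der}_{X'}(-\log D')$ is exactly the first summand. For the quotient, I compose the intrinsic surjection $i^*T_X \twoheadrightarrow N$ with the inclusion $i^*\textup{Der}_X(-\log D) \hookrightarrow i^*T_X$. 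This composite is surjective at every $p \in D'$ by transversality (the log vector field $\theta$ with $\theta_p \notin T_{X',p}$ has nonzero image in $N_p$), and its kernel agrees locally with $\textup{Der}_{X'}(-\log D')$ by the basis computation.

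The main obstacle is ensuring that the coordinate-dependent normal form yields coordinate-independent maps that fit into a short exact sequence. Since both ends of the sequence and both arrows are defined intrinsically from the restriction of the standard sequence $0 \to T_{X'} \to i^*T_X \to N \to 0$ to the logarithmic subsheaves, the only claims to check are the two pointwise statements above — the containment of the image in $i^*\textup{Der}_X(-\log D)$ and the surjectivity onto $N$ — both of which are immediate from the local normal form. Exactness as a sequence of locally free sheaves of the stated ranks $n-1$, $n$, and $1$ then follows.
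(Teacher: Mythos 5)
Your proposal is correct and follows essentially the same route as the paper: the maps are the intrinsic ones induced by $0 \to T_{X'} \to i^*T_X \to N \to 0$, and exactness is checked via the local splitting $i^*\textup{Der}_X(-\log D) \cong \textup{Der}_{X'}(-\log D') \oplus \mathcal{O}_{X'}$ coming from the triviality lemma and Saito's criterion, which is exactly the decomposition the paper invokes from the proof of its freeness-under-hyperplane-section proposition (the paper only adds a remark that GAGA transfers the exactness to the algebraic category). Your extra detail on straightening $X'$ to $\{y=0\}$ by a shear is a fleshed-out version of the paper's flow-along-the-trivial-factor argument, not a different method.
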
 

\begin{proof}
First note all the arrows in the sequence are naturally defined. The map $\textup{Der}_{X'}(-\log D') \to i^{*}(\textup{Der}_X(-\log D))$ is induced by $TX' \to i^{*}TX$, and $i^{*}(\textup{Der}_X(-\log D)) \to N$ is the restriction of $i^{*}TX \to N$ to $i^{*}(\textup{Der}_X(-\log D))$. This sequence of analytic coherent sheaves is exact because locally analytically the sheaf in the middle is the direct sum of the sheaves on the sides. The same sequence is also exact in the category of algebraic coherent sheaves as the GAGA principle applies.
\end{proof}

Taking Chern classes of this exact sequence and then pushing forward to $X$, we get:

\begin{equation*}
\begin{split}
i_*\left(c(\textup{Der}_{X'}(-\log D')) \cdot c(N) \cap[X']\right) &= c(\textup{Der}_X(-\log D)) \cap i_*[X'] \\
                                                                                                          &= c_1(\mathscr{O}_{X}(1)) \cdot c(\textup{Der}_X(-\log D)) \cap [X] \\
\end{split}
\end{equation*} 

As we have advertised before, there is a similar formula for CSM classes. For its proof, see proposition 2.6 in \cite{aluffi}.

\begin{prop}\label{csm}
\begin{equation*}
i_*\left(c(N) \cap c_{SM}(1_{U'})\right) = c_1(\mathscr{O}_X(1)) \cap (c_{SM}(1_U))
\end{equation*}
\end{prop}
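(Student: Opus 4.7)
My plan is to reduce the claim to a stratified version using additivity of the Chern--Schwartz--MacPherson (CSM) transformation, establishing it first for smooth closed strata by a vector bundle computation parallel to the preceding proposition, and then for general smooth locally closed strata by a Verdier--Riemann--Roch type identity. Throughout let $h = c_1(\mathscr{O}_X(1))$.

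First, by additivity of CSM applied to $1_U = 1_X - 1_D$ and $1_{U'} = 1_{X'} - 1_{D'}$, the proposition splits into its $1_X$ and $1_D$ contributions. The $1_X$ part is immediate: since $X$ is smooth, $c_{SM}(1_X) = c(TX) \cap [X]$, and using the Whitney relation $c(N) \cdot c(TX') = i^* c(TX)$ together with the projection formula and $i_*[X'] = h \cap [X]$, one finds $i_*(c(N) \cdot c(TX') \cap [X']) = h \cdot c(TX) \cap [X]$. The proposition thus reduces to the analogous identity for $D$,
\begin{equation*}
i_*\bigl(c(N) \cap c_{SM}(1_{D'})\bigr) = h \cap c_{SM}(1_D).
\end{equation*}

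Second, I would stratify $D$ by a Whitney stratification into smooth locally closed pieces $\{S_\alpha\}$, refining it (using the Bertini-type corollary above) so that $H$ is transverse to every stratum. Additivity of CSM then decomposes the reduced identity stratum by stratum, so it suffices to prove
\begin{equation*}
i_*\bigl(c(N|_{S \cap H}) \cap c_{SM}(1_{S \cap H})\bigr) = h \cap c_{SM}(1_S)
\end{equation*}
for each smooth locally closed $S \subset X$ transverse to $H$. When $S$ is smooth and \emph{closed}, this follows directly from the vector bundle sequence $0 \to T(S \cap H) \to TS|_{S \cap H} \to N|_{S \cap H} \to 0$ combined with the projection formula, exactly as in the $1_X$ case above.

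The main obstacle is the case where $S$ is smooth but only locally closed: its closure $\bar S$ is typically singular, so the naive decomposition $1_S = 1_{\bar S} - 1_{\partial S}$ does not reduce to the smooth closed case. The cleanest way around this is to invoke Schürmann's Verdier--Riemann--Roch identity $i^! c_{SM}(\phi) = c(N) \cap c_{SM}(i^*\phi)$, valid for constructible functions $\phi$ whose supports are transverse to the regular embedding $i: X' \hookrightarrow X$; composing with $i_*$ and using the Cartier divisor relation $i_* \circ i^! = h \cap (-)$ immediately yields the stratum-wise identity. An alternative, more self-contained route would be a downward induction on the dimension of strata, with the base case of zero-dimensional strata being trivial since a general $H$ avoids them entirely.
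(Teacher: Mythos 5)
Your proposal is correct, but it substitutes a different key input for the one the paper actually uses. The paper's proof consists precisely of your first reduction --- writing $1_{U'}=1_{X'}-1_{D'}$, handling the smooth part via $c(N)\cdot c(TX')=i^*c(TX)$, the projection formula and $i_*[X']=c_1(\mathscr{O}_X(1))\cap[X]$ --- and then simply quotes Aluffi's hyperplane-section formula $i_*\bigl(c(N)\cap c_{SM}(1_{D'})\bigr)=c_1(\mathscr{O}_X(1))\cap c_{SM}(1_D)$ (Claim 1 of \cite{MR1316973}); no stratification enters. You instead re-prove that input by invoking Sch\"urmann's transversal Verdier--Riemann--Roch $i^!c_{SM}(\phi)=c(N)\cap c_{SM}(i^*\phi)$ together with $i_*\circ i^!=c_1(\mathscr{O}_X(1))\cap(-)$, which is a legitimate alternative; in fact, once you have that identity you may apply it directly to $\phi=1_U$, so your preliminary splitting and stratum-by-stratum decomposition are redundant. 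The trade-off: the paper's route is short and leans on an elementary, already-published statement tailored to hypersurface sections, while yours is more general and conceptually uniform (any constructible function, any transversal regular embedding) but imports a heavier theorem and needs $H$ transverse to a Whitney stratification adapted to $(X,D)$ --- a condition supplied for general $H$ by standard generic transversality, not by the paper's Bertini-type corollary, whose notion of transversality (to the spaces $TD_p$) is a priori a different condition; since the induction in the final section only uses general hyperplanes, this is harmless. Your fallback ``downward induction on strata'' sketch, however, is not a proof as stated (the closures of strata are singular, exactly the obstacle you flag), so the VRR argument should be regarded as your actual route.
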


\begin{remark}
Under the basic setting of this section, $X'$ is a hyperplane section of $X$, so the normal bundle $N \cong \mathscr{O}_{X'}(1)$.
\end{remark}

\section{Coda: The fourth step of the proof}
With all the preparations in previous sections, we can now prove theorem ~\ref{main} by an induction of the dimension of $X$.

\begin{proof}[Proof of theorem ~\ref{main}]

Let $X$ be a nonsingular complex projective curve, $D$ be a reduced effective divisor in $X$. Since in this case $D$ itself is nonsingular, therefore normal crossing, thus the classes in equation~\eqref{formula} are rationally equivalent\cite{MR1697199}. Alternatively, we can get the same conclusion by a direct computation. If $D= \sum P_i$, then $c_{SM}(U) = c(TX) \cap [X] - \sum [P_i]$. To calculate the Chern class of the logarithmic vector fields, we use the following exact sequences:
\begin{equation*}
0 \to \textup{Der}_X(-\log D) \to TX \to \mathscr{O}_D(D) \to 0
\end{equation*}
\begin{equation*}
0 \to \mathscr{O}_X \to \mathscr{O}_X(D) \to \mathscr{O}_D(D) \to 0
\end{equation*}
Taking Chern classes of these exact sequences we get $c(\textup{Der}_X(-\log D)) \cap [X] = (c(TX) \cdot c(\mathscr{O}_X(D)^{-1}) \cap [X] = c(TX) \cap [X] - \sum [P_i]$.

Assume the theorem is proved for all $k-1$ dimensional nonsingular complex projective varieties, we now finish the inductive step. Let $X$ be a $k$ dimensional nonsingular complex projective variety, $D$ be a locally quasi-homogeneous free divisor in $X$, $U$ be the complement of $D$ in $X$, and $H$ is a hyperplane of the ambient projective space intersecting both $X$ and $D$ transversally. Denote by $X'$, $D'$ and $U'$  respectively the intersections of the corresponding spaces with $H$.  

According to proposition ~\ref{degree}, the degrees of the CSM class and the Chern class of the logarithmic vector fields are always the same. To establish other numerical relations, we use the following equalities:
\begin{equation*}
\begin{split}
   & \int \! c_1(\mathscr{O}_X(1))^i \cap c_{SM}(1_U) \\
 =& \int \! c_1(\mathscr{O}_X(1))^{i-1} \cap \Big(c_1(\mathscr{O}_X(1)) \cap c_{SM}(1_U)\Big) \quad (i \geq 1) \\
                                                                                       =& \int \! c_1(\mathscr{O}_X(1))^{i-1} \cap i_*\Big(c(\mathscr{O}_{X'}(1)) \cap c_{SM}(1_{U'})\Big) \quad \text{(proposition ~\ref{csm})}\\
                                                                                       =& \int \! c_1(\mathscr{O}_X(1))^{i-1} \cap \Big(c(\mathscr{O}_X(1) \cap i_*c_{SM}(1_U')\Big) \quad \text{(projection formula)} \\
                                                                                       =& \int \! c_1(\mathscr{O}_X(1))^{i-1} \cap i_*c_{SM}(1_U') + \int \! \! c_1(\mathscr{O}_X(1))^i \cap i_*c_{SM}(1_U') \\
                                                                                       =& \int \! c_1(\mathscr{O}_{X'}(1))^{i-1} \cap c_{SM}(1_U') + \int \! \! c_1(\mathscr{O}_{X'}(1))^i \cap c_{SM}(1_U') \quad \text{(projection formula)}\\                                                                                   
\end{split}
\end{equation*}

Applying the analogous formula for logarithmic vector fields instead of proposition ~\ref{csm} we have:
\begin{equation*}
\begin{split}
   & \int \! c_1(\mathscr{O}_X(1))^i \cap \Big(c(\textup{Der}_X(-\log D) \cap [X] \Big) \\
 =& \int \! c_1(\mathscr{O}_{X'}(1))^{i-1} \cap \Big(c(\textup{Der}_{X'}(-\log D') \cap [X'] \Big) +\\
                                                                                                                                              & \int \! \! c_1(\mathscr{O}_{X'}(1))^i \cap \Big(c(\textup{Der}_{X'}(-\log D') \cap [X'] \Big) \quad (i \geq 1)
\end{split}
\end{equation*}

Now the theorem follows from inductive hypothesis.

\end{proof}

\begin{proof}[Proof of corollary ~\ref{prspace}]
For $X=\Pbb^n$, the morphism from $A_i(X)$ to $\Zbb$
\begin{equation*}
\alpha \mapsto \int \! \alpha \cap c_1(\mathscr{O}(1))^i
\end{equation*}
is an isomorphism.
\end{proof}

\bibliographystyle{alpha}
\bibliography{liaobib}

\begin{thebibliography}{CJNMM96}

\bibitem[Alu]{aluffi}
Paolo Aluffi.
\newblock Euler characteristics of general linear sections and polynomial chern
  classes.
\newblock arXiv:1207.6638. To appear in Rendiconti del Circolo Matematico di
  Palermo.

\bibitem[Alu99]{MR1697199}
Paolo Aluffi.
\newblock Chern classes for singular hypersurfaces.
\newblock {\em Trans. Amer. Math. Soc.}, 351(10):3989--4026, 1999.

\bibitem[Alu12a]{MR2928931}
Paolo Aluffi.
\newblock Chern classes of free hypersurface arrangements.
\newblock {\em J. Singul.}, 5:19--32, 2012.

\bibitem[Alu12b]{hyparr}
Paolo Aluffi.
\newblock Grothendieck classes and {C}hern classes of hyperplane arrangements.
\newblock {\em Int. Math. Res. Not.}, 2012.

\bibitem[CJNMM96]{MR1363009}
Francisco~J. Castro-Jim{\'e}nez, Luis Narv{\'a}ez-Macarro, and David Mond.
\newblock Cohomology of the complement of a free divisor.
\newblock {\em Trans. Amer. Math. Soc.}, 348(8):3037--3049, 1996.

\bibitem[CMNM02]{MR1931962}
Francisco Calder{\'o}n-Moreno and Luis Narv{\'a}ez-Macarro.
\newblock The module {$\mathscr{D}f^s$} for locally quasi-homogeneous free
  divisors.
\newblock {\em Compositio Math.}, 134(1):59--74, 2002.

\bibitem[CMNM09]{MR2500865}
F.~J. Calder{\'o}n~Moreno and L.~Narv{\'a}ez~Macarro.
\newblock On the logarithmic comparison theorem for integrable logarithmic
  connections.
\newblock {\em Proc. Lond. Math. Soc. (3)}, 98(3):585--606, 2009.

\bibitem[Fab]{faber}
Eleonore Faber.
\newblock Towards transversality of singular varieties: splayed divisors.
\newblock arXiv:1201.2186.

\bibitem[Ful84]{MR732620}
William Fulton.
\newblock {\em Intersection theory}, volume~2 of {\em Ergebnisse der Mathematik
  und ihrer Grenzgebiete (3) [Results in Mathematics and Related Areas (3)]}.
\newblock Springer-Verlag, Berlin, 1984.

\bibitem[Huh12]{MR2904577}
June Huh.
\newblock Milnor numbers of projective hypersurfaces and the chromatic
  polynomial of graphs.
\newblock {\em J. Amer. Math. Soc.}, 25(3):907--927, 2012.

\bibitem[Lan03]{MR1971155}
Adrian Langer.
\newblock Logarithmic orbifold {E}uler numbers of surfaces with applications.
\newblock {\em Proc. London Math. Soc. (3)}, 86(2):358--396, 2003.

\bibitem[Lia]{liao}
Xia Liao.
\newblock Chern classes for logarithmic vector fields.
\newblock Proceedings of the Hefei Conference on Singularity, arXiv:1201.6110.

\bibitem[Mac74]{MR0361141}
R.~D. MacPherson.
\newblock Chern classes for singular algebraic varieties.
\newblock {\em Ann. of Math. (2)}, 100:423--432, 1974.

\bibitem[MS01]{MR1843320}
Mircea Musta{\c{t}}{\v{a}} and Henry~K. Schenck.
\newblock The module of logarithmic {$p$}-forms of a locally free arrangement.
\newblock {\em J. Algebra}, 241(2):699--719, 2001.

\bibitem[OT92]{MR1217488}
Peter Orlik and Hiroaki Terao.
\newblock {\em Arrangements of hyperplanes}, volume 300 of {\em Grundlehren der
  Mathematischen Wissenschaften [Fundamental Principles of Mathematical
  Sciences]}.
\newblock Springer-Verlag, Berlin, 1992.

\bibitem[Sai71]{MR0294699}
Kyoji Saito.
\newblock Quasihomogene isolierte {S}ingularit\"aten von {H}yperfl\"achen.
\newblock {\em Invent. Math.}, 14:123--142, 1971.

\bibitem[Sai80]{MR586450}
Kyoji Saito.
\newblock Theory of logarithmic differential forms and logarithmic vector
  fields.
\newblock {\em J. Fac. Sci. Univ. Tokyo Sect. IA Math.}, 27(2):265--291, 1980.

\bibitem[Sch03]{MR2031639}
J{\"o}rg Sch{\"u}rmann.
\newblock {\em Topology of singular spaces and constructible sheaves},
  volume~63 of {\em Instytut Matematyczny Polskiej Akademii Nauk. Monografie
  Matematyczne (New Series) [Mathematics Institute of the Polish Academy of
  Sciences. Mathematical Monographs (New Series)]}.
\newblock Birkh\"auser Verlag, Basel, 2003.

\bibitem[Tei77]{MR0568901}
Bernard Teissier.
\newblock The hunting of invariants in the geometry of discriminants.
\newblock In {\em Real and complex singularities ({P}roc. {N}inth {N}ordic
  {S}ummer {S}chool/{NAVF} {S}ympos. {M}ath., {O}slo, 1976)}, pages 565--678.
  Sijthoff and Noordhoff, Alphen aan den Rijn, 1977.

\bibitem[XY96]{MR1385285}
Yi-Jing Xu and Stephen S.-T. Yau.
\newblock Micro-local characterization of quasi-homogeneous singularities.
\newblock {\em Amer. J. Math.}, 118(2):389--399, 1996.

\end{thebibliography}
  
\end{document}